\newtheorem{theorem}{Theorem}[section]
\newtheorem{lemma}[theorem]{Lemma}
\newtheorem{cor}[theorem]{Corollary}
\theoremstyle{definition}
\theoremstyle{remark}
\newtheorem{remark}[theorem]{Remark}
\numberwithin{equation}{section}
\DeclareMathOperator*{\esssup}{ess\,sup}
\DeclareMathOperator*{\essinf}{ess\,inf}
\let \la=\lambda
\let \e=\varepsilon
\let \o=\omega
\let \a=\alpha
\let \O=\Omega
\let \si=\sigma
\begin{document}
\title[estimates involving sparse operators]
{On pointwise estimates involving sparse operators}

\author{Andrei K. Lerner}
\address{Department of Mathematics,
Bar-Ilan University, 5290002 Ramat Gan, Israel}
\email{lernera@math.biu.ac.il}

\thanks{The  author was supported by the Israel Science Foundation (grant No. 953/13).}

\begin{abstract} We obtain an alternative approach to recent results by M. Lacey \cite{La} and T. Hyt\"onen {\it et al.} \cite{HRT}
about a pointwise domination of $\omega$-Calder\'on-Zygmund operators by sparse operators. This approach is rather elementary
and it also works for a class of non-integral singular operators.
\end{abstract}

\keywords{Calder\'on-Zygmund operators, sparse operators, pointwise estimates, sharp weighted bounds.}

\subjclass[2010]{42B20, 42B25}

\maketitle

\section{Introduction}
This paper is motivated by several recent works about a domination of
Calder\'on-Zygmund operators by sparse operators. Such a domination was first established by the author \cite{Le}
in terms of $X$-norms, where $X$ is an arbitrary Banach function space.
This result was used in order to give an alternative proof of the $A_2$ theorem obtained earlier by T.~Hyt\"onen~\cite{H}.

The $X$-norm estimate in \cite{Le} was proved for a class of $\omega$-Calder\'on-Zygmund operators with the modulus of continuity $\omega$
satisfying the logarithmic Dini condition $\int_0^1\o(t)\log\frac{1}{t}\frac{dt}{t}<\infty$.
After that, under the same assumption on $\o$, the $X$-norm bound was improved by a pointwise bound independently and simultaneously by J. Conde-Alonso and G.~Rey~\cite{CR},
and by the author and F. Nazarov~\cite{LN}.

Later, M. Lacey \cite{La} found a new method allowing him to relax the log-Dini condition in the pointwise bound till the classical Dini condition $\int_0^1\o(t)\frac{dt}{t}<\infty$. 
Very recently, T. Hyt\"onen, L. Roncal and O.~Tapiola \cite{HRT} elaborated the proof in \cite{La} to get a precise linear dependence on the Dini constant
with a subsequent application to rough singular integrals. 

In the present note we modify a main idea from Lacey's work \cite{La} with the aim to give a rather short and elementary proof of the result in~\cite{HRT}.
This yields a further simplification of the $A_2$ theorem and related bounds.
Our modification consists in a different cubic truncation of a Calder\'on-Zygmund operator $T$ with the help of an auxiliary  
``grand maximal truncated" operator ${\mathcal M}_T$. This way of truncation allows to get a very simple recursive relation for $T$, and, as a corollary, the pointwise
bound by a sparse operator.  

Notice also that our proof has an abstract nature, and, in particular, it is easily generalized to a class of singular
non-kernel operators.

\section{Main definitions}
\subsection{Sparse families and operators}
By a cube in ${\mathbb R}^n$ we mean a half-open cube $Q=\prod_{i=1}^n[a_i,a_i+h), h>0$.
Given a cube $Q_0\subset {\mathbb R}^n$, let ${\mathcal D}(Q_0)$ denote the set of all dyadic cubes with respect to $Q_0$, that is, the cubes
obtained by repeated subdivision of $Q_0$ and each of its descendants into $2^n$ congruent subcubes.

We say that a family ${\mathcal S}$ of cubes from ${\mathbb R}^n$ is $\eta$-sparse, $0<\eta<1$, if for every
$Q\in {\mathcal S}$, there exists a measurable set $E_Q\subset Q$ such that $|E_Q|\ge \eta|Q|$, and the sets
$\{E_Q\}_{Q\in {\mathcal S}}$ are pairwise disjoint. Usually $\eta$ will depend only on the dimension, and when this parameter is unessential we will skip it.

Denote $f_Q=\frac{1}{|Q|}\int_Qf$. Given a sparse family ${\mathcal S}$, define a sparse operator ${\mathcal A}_{{\mathcal S}}$ by
$${\mathcal A}_{\mathcal S}f(x)=\sum_{Q\in {\mathcal S}}f_Q\chi_{Q}(x).$$

\subsection{$\omega$-Calder\'on-Zygmund operators}
Let $\omega:[0,1]\to [0,\infty)$ be a modulus of continuity, that is, $\o$ is increasing, subadditive and $\o(0)=~0$.

We say that $T$ is an $\omega$-Calder\'on-Zygmung operator if $T$ is $L^2$ bounded, represented as
$$Tf(x)=\int_{{\mathbb R}^n}K(x,y)f(y)dy\quad\text{for all}\,\,x\not\in \text{supp}\,f$$
with kernel $K$ satisfying the size condition
$|K(x,y)|\le \frac{C_K}{|x-y|^n},x\not=y,$ and the smoothness condition
$$|K(x,y)-K(x',y)|+|K(y,x)-K(y,x')|\le \o\left(\frac{|x-x'|}{|x-y|}\right)\frac{1}{|x-y|^n}$$
for $|x-y|>2|x-x'|$.

We say that $\o$ satisfies the Dini condition if
$\|\o\|_{\text{Dini}}=\int_0^1\o(t)\frac{dt}{t}<\infty.$

\section{The Lacey-Hyt\"onen-Roncal-Tapiola Theorem}
As was mentioned in the Introduction, we give here an alternative proof of a recent result by T. Hyt\"onen {\it et al.} \cite{HRT}, which in turn is a revised version of
Lacey's domination theorem \cite{La}.

\begin{theorem}\label{mainr}
Let $T$ be an $\omega$-Calder\'on-Zygmund operator with $\omega$ satisfying the Dini condition.
Then, for every compactly supported $f\in L^1({\mathbb R}^n)$, there exists a sparse family
${\mathcal S}$ such that for a.e. $x\in {\mathbb R}^n$,
\begin{equation}\label{mainestimate}
|Tf(x)|\le c_n(\|T\|_{L^2\to L^2}+C_K+\|\omega\|_{\text{\rm{Dini}}}){\mathcal A}_{{\mathcal S}}|f|(x).
\end{equation}
\end{theorem}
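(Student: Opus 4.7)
The plan is to reduce (\ref{mainestimate}) to a local recursive inequality on a cube $Q_0$ containing $\text{supp}\,f$, and then iterate along a stopping tree. Concretely, for any such $Q_0$ I aim to produce pairwise disjoint subcubes $\{P_j\}\subset {\mathcal D}(Q_0)$ with $\sum_j|P_j|\le \tfrac12|Q_0|$ such that
\begin{equation}\label{plan:recursion}
|T(f\chi_{3Q_0})(x)|\chi_{Q_0}(x)\le c_T\,f_{3Q_0}\chi_{Q_0}(x)+\sum_j|T(f\chi_{3P_j})(x)|\chi_{P_j}(x)\quad\text{a.e.},
\end{equation}
where $c_T=c_n(\|T\|_{L^2\to L^2}+C_K+\|\o\|_{\text{Dini}})$. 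Iterating (\ref{plan:recursion}) along the descendants of each $P_j$ generates a $(\tfrac12)$-sparse family ${\mathcal S}=\{Q_0,P_j,P_{j,k},\dots\}$, and a telescoping of the resulting geometric sum yields (\ref{mainestimate}) (after the routine passage to the $3^n$ shifted dyadic grids to absorb the enlargements $3Q$).

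The main auxiliary object is the grand maximal truncated operator
$${\mathcal M}_T f(x)=\sup_{Q\ni x}\esssup_{\xi\in Q}|T(f\chi_{{\mathbb R}^n\setminus 3Q})(\xi)|.$$
The chief obstacle is to establish the weak-type $(1,1)$ bound
$$\|{\mathcal M}_T f\|_{L^{1,\infty}}\le c_n(\|T\|_{L^2\to L^2}+C_K+\|\o\|_{\text{Dini}})\|f\|_{L^1}$$
with \emph{linear} dependence on $\|\o\|_{\text{Dini}}$. This is a Cotlar-type estimate: for $Q\ni x$ and $\xi\in Q$, introducing an auxiliary point $y\in Q$ controls the difference $T(f\chi_{{\mathbb R}^n\setminus 3Q})(\xi)-T(f\chi_{{\mathbb R}^n\setminus 3Q})(y)$ by $c_n\|\o\|_{\text{Dini}}Mf(x)$ via the $\o$-smoothness of $K$ and a dyadic annular decomposition, while averaging $y$ over $Q$ bounds the remaining pieces by $M(Tf)(x)+c_n(\|T\|_{L^2\to L^2}+C_K)Mf(x)$. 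The weak $(1,1)$ endpoint bound for $T$ itself (available with the asserted dependence on constants by the standard theory for $\o$-Calder\'on-Zygmund operators under the Dini assumption) together with the weak $(1,1)$ bound on $M$ then delivers the claim.

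With the weak bound on ${\mathcal M}_T$ in hand, set $A=2c_n(\|T\|_{L^2\to L^2}+C_K+\|\o\|_{\text{Dini}})$ and form the exceptional set
$$E=\{x\in Q_0:M(f\chi_{3Q_0})(x)>Af_{3Q_0}\}\cup\{x\in Q_0:{\mathcal M}_T(f\chi_{3Q_0})(x)>Af_{3Q_0}\},$$
so that $|E|\le \tfrac12|Q_0|$ by the weak estimates. A Calder\'on-Zygmund stopping decomposition of $E$ inside ${\mathcal D}(Q_0)$ produces the required subcubes $\{P_j\}$ with $E\subset \bigcup_jP_j$, $\sum|P_j|\le \tfrac12|Q_0|$, and $P_j\not\subset E$. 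For $x\in Q_0\setminus\bigcup_jP_j$, $x\notin E$ gives directly $|T(f\chi_{3Q_0})(x)|\le {\mathcal M}_T(f\chi_{3Q_0})(x)\le Af_{3Q_0}$, accounting for the first term of (\ref{plan:recursion}). For $x\in P_j$, splitting $T(f\chi_{3Q_0})=T(f\chi_{3Q_0\setminus 3P_j})+T(f\chi_{3P_j})$ and using any $y\in P_j\setminus E$ to bound the first summand by ${\mathcal M}_T(f\chi_{3Q_0})(y)\le Af_{3Q_0}$ leaves exactly the recursive tail. Iterating on each $P_j$ and collecting the cubes finishes the proof.
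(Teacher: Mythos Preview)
Your overall scheme---the recursive estimate, the stopping cubes $P_j$, and the use of the grand maximal truncation ${\mathcal M}_T$---matches the paper's. But there is a genuine gap at the step where you assert that for $x\in Q_0\setminus\bigcup_jP_j$ one has ``directly'' $|T(f\chi_{3Q_0})(x)|\le{\mathcal M}_T(f\chi_{3Q_0})(x)$. This inequality is \emph{not} automatic: by definition ${\mathcal M}_Tg(x)$ only sees $T(g\chi_{{\mathbb R}^n\setminus 3Q})$ over cubes $Q\ni x$, never $Tg$ itself, and shrinking $Q$ to $\{x\}$ leaves the local piece $T(g\chi_{3Q})$ unaccounted for. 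For a generic Calder\'on--Zygmund operator this local piece is of size comparable to $|g(x)|$, not zero. The paper isolates precisely this point as Lemma~\ref{mainl}(i): using approximate continuity of $T(f\chi_{3Q_0})$ together with the weak $(1,1)$ bound for $T$ applied to $f\chi_{3Q(x,s)}$ one obtains
\[
|T(f\chi_{3Q_0})(x)|\le c_n\|T\|_{L^1\to L^{1,\infty}}|f(x)|+{\mathcal M}_{T,Q_0}f(x)\quad\text{a.e. on }Q_0.
\]
Your exceptional set $E$ already contains $\{M(f\chi_{3Q_0})>Af_{3Q_0}\}$, so once this lemma is supplied the $|f(x)|$ correction is absorbed and the rest of your argument goes through; but as written the passage is unjustified.

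Two smaller points. First, your Cotlar-type bound for ${\mathcal M}_Tf$ in terms of $\|\o\|_{\text{Dini}}Mf+M(Tf)+Mf$ does not immediately yield weak $(1,1)$ for ${\mathcal M}_T$, since $M$ does not map $L^{1,\infty}$ to $L^{1,\infty}$; the paper instead proves ${\mathcal M}_Tf\le c_n(\|\o\|_{\text{Dini}}+C_K)Mf+T^\star f$ (Lemma~\ref{mainl}(ii)) and invokes the known weak $(1,1)$ bound for $T^\star$. Second, to obtain $\sum_j|P_j|\le\tfrac12|Q_0|$ from the Calder\'on--Zygmund decomposition of $\chi_E$ one needs $|E|\le 2^{-(n+2)}|Q_0|$, not merely $|E|\le\tfrac12|Q_0|$; this only requires enlarging $A$.
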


We will need a number of auxiliary maximal operators.
The key role in the proof is played by the maximal operator ${\mathcal M}_T$ defined by
$${\mathcal M}_Tf(x)=\sup_{Q\ni x}\,\esssup_{\xi\in Q}|T(f\chi_{{\mathbb R}^n\setminus 3Q})(\xi)|,$$
where the supremum is taken over all cubes $Q\subset {\mathbb R}^n$ containing $x$.
This object can be called the grand maximal truncated operator. Recall that the standard maximal truncated operator
is defined by
$$T^{\star}f(x)=\sup_{\e>0}\Big|\int_{|y-x|>\e}K(x,y)f(y)dy\Big|.$$
Given a cube $Q_0$, for $x\in Q_0$ define a local version of ${\mathcal M}_T$ by
$${\mathcal M}_{T,Q_0}f(x)=\sup_{Q\ni x, Q\subset Q_0}\esssup_{\xi\in Q}|T(f\chi_{3Q_0\setminus 3Q})(\xi)|.$$
Finally, let $M$ be the standard Hardy-Littlewood maximal operator.

\begin{lemma}\label{mainl} The following pointwise estimates hold:
\begin{enumerate}
\renewcommand{\labelenumi}{(\roman{enumi})}
\item for a.e. $x\in Q_0$,
$$
|T(f\chi_{3Q_0})(x)|\le c_{n}\|T\|_{L^1\to L^{1,\infty}}|f(x)|+{\mathcal M}_{T,Q_0}f(x);
$$
\item for all $x\in {\mathbb R}^n$,
$$
{\mathcal M}_Tf(x)\le c_n(\|\omega\|_{\text{\rm{Dini}}}+C_K)Mf(x)+T^{\star}f(x).
$$
\end{enumerate}
\end{lemma}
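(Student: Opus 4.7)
My plan is to handle the two parts separately, both relying on passing from the pointwise value of $T$ at $x$ to averages or essential suprema on small cubes, exploiting the weak $L^1$ bound for $T$.

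For part (i), fix a cube $Q\ni x$ with $Q\subset Q_0$ and decompose
\[
T(f\chi_{3Q_0})=T(f\chi_{3Q})+T(f\chi_{3Q_0\setminus 3Q}).
\]
By the definition of the local grand maximal operator, $|T(f\chi_{3Q_0\setminus 3Q})(\xi)|\le\mathcal{M}_{T,Q_0}f(x)$ for a.e.~$\xi\in Q$. The weak $L^1$ bound combined with Chebyshev's inequality gives
\[
|\{\xi\in Q:|T(f\chi_{3Q})(\xi)|>\lambda\}|\le \lambda^{-1}\|T\|_{L^1\to L^{1,\infty}}\int_{3Q}|f|,
\]
so choosing $\lambda=c_n\|T\|_{L^1\to L^{1,\infty}}|3Q|^{-1}\int_{3Q}|f|$ makes the exceptional set have measure at most $|Q|/2$. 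Consequently, the median of $|T(f\chi_{3Q_0})|$ on $Q$ is at most $\lambda+\mathcal{M}_{T,Q_0}f(x)$. For a.e.~$x$ (both a Lebesgue point of $|f|$ and a density point of every level set of $|T(f\chi_{3Q_0})|$), the median over a shrinking sequence of cubes $Q_k\ni x$ converges to $|T(f\chi_{3Q_0})(x)|$, while $|3Q_k|^{-1}\int_{3Q_k}|f|\to|f(x)|$. Passing to the limit yields the claim.

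For part (ii), fix $x$, a cube $Q\ni x$, and $\xi\in Q$. Because $\xi,x\in 3Q$, the kernel representation of $T(f\chi_{\mathbb{R}^n\setminus 3Q})$ is valid at both points, and I add and subtract $T(f\chi_{\mathbb{R}^n\setminus 3Q})(x)$. This reduces (ii) to two estimates. The difference
\[
\Bigl|\int_{\mathbb{R}^n\setminus 3Q}\bigl(K(\xi,y)-K(x,y)\bigr)f(y)\,dy\Bigr|
\]
I control by splitting $\mathbb{R}^n\setminus 3Q$ into dyadic shells $\{2^k\ell(Q)<|y-x|\le 2^{k+1}\ell(Q)\}$. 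On the outer shells, where $|x-y|>2|x-\xi|$, the smoothness condition supplies a factor $\omega(c_n 2^{-k})$ and the shell integral is dominated by $Mf(x)$; summing in $k$ produces the contribution $c_n\|\omega\|_{\text{Dini}}Mf(x)$. On the finitely many innermost shells, where smoothness fails, the size bound alone yields $c_nC_KMf(x)$. The term $|T(f\chi_{\mathbb{R}^n\setminus 3Q})(x)|$ I handle by choosing $r=3\sqrt{n}\ell(Q)$ so that $B(x,r)\supset 3Q$ and writing $\mathbb{R}^n\setminus 3Q=(\mathbb{R}^n\setminus B(x,r))\cup(B(x,r)\setminus 3Q)$: the first piece contributes at most $T^\star f(x)$ and the annular piece contributes $c_nC_KMf(x)$ via the size bound on $K$. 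Taking $\esssup_{\xi\in Q}$ and then $\sup_{Q\ni x}$ gives (ii).

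The main delicate step lies in part (i): the justification that, for a.e.~$x$, medians over shrinking cubes recover the pointwise value of $T(f\chi_{3Q_0})$, in spite of the fact that this function is only weakly integrable. This follows from a standard Lebesgue density argument applied to the level sets $\{|T(f\chi_{3Q_0})|>\lambda\}$, but it must be invoked explicitly. An equivalent workaround, avoiding medians, is to observe that $|T(f\chi_{3Q_0})|^{1/2}\in L^1_{\rm loc}$ by Kolmogorov, apply Lebesgue differentiation to this function, and exploit $\sqrt{a+b}\le\sqrt{a}+\sqrt{b}$, at the cost of an extra dimensional constant in front of $\mathcal{M}_{T,Q_0}f(x)$. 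Part (ii) is essentially mechanical once the dimension-dependent threshold between smooth and rough shells is fixed and the sum $\sum_k\omega(c_n 2^{-k})$ is compared with $\|\omega\|_{\text{Dini}}$.
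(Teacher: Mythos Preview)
Your proof is correct and follows essentially the same approach as the paper. For part (i) the paper invokes approximate continuity of $T(f\chi_{3Q_0})$ directly rather than your equivalent median formulation, and for part (ii) it introduces the ball $B_x$ of radius $2\,\text{diam}\,Q$ \emph{before} splitting (so smoothness applies on all of $\mathbb{R}^n\setminus B_x$ with no separate inner-shell case), but these are minor organizational variations of the same argument.
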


\begin{proof} We start with part (i).
Suppose that $x\in \text{int}\,Q_0$, and let $x$ be a point of approximate continuity of $T(f\chi_{3Q_0})$ (see, e.g., \cite[p. 46]{EG}).
Then for every $\e>0$, the sets
$$E_s(x)=\{y\in B(x,s):|T(f\chi_{3Q_0})(y)-T(f\chi_{3Q_0})(x)|<\e\}$$
satisfy $\lim_{s\to 0}\frac{|E_s(x)|}{|B(x,s)|}=1$, where $B(x,s)$ is the open ball centered at $x$ of radius $s$.

Denote by $Q(x,s)$ the smallest cube centered at $x$ and containing $B(x,s)$. Let $s>0$ be so small that $Q(x,s)\subset Q_0$. Then for
a.e. $y\in E_s(x)$,
$$|T(f\chi_{3Q_0}(x)|<|T(f\chi_{3Q_0})(y)|+\e\le |T(f\chi_{3Q(x,s)})(y)|+{\mathcal M}_{T,Q_0}f(x)+\e.$$

Therefore, applying the weak type $(1,1)$ of $T$ yields
\begin{eqnarray*}
&&|T(f\chi_{3Q_0}(x)|\le \essinf_{y\in E_s(x)}|T(f\chi_{3Q(x,s)})(y)|+{\mathcal M}_{T,Q_0}f(x)+\e\\
&&\le \|T\|_{L^1\to L^{1,\infty}}\frac{1}{|E_s(x)|}\int_{3Q(x,s)}|f|+{\mathcal M}_{T,Q_0}f(x)+\e.
\end{eqnarray*}
Assuming additionally that $x$ is a Lebesgue point of $f$ and letting subsequently $s\to 0$ and $\e\to 0$, we obtain part (i).

Turn to part (ii). Let $x,\xi\in Q$. Denote by $B_x$ the closed ball centered at $x$ of radius $2\,\text{diam}\,Q$. Then $3Q\subset B_x$, and we obtain
\begin{eqnarray*}
|T(f\chi_{{\mathbb R}^n\setminus 3Q})(\xi)|&\le&
|T(f\chi_{{\mathbb R}^n\setminus B_x})(\xi)-T(f\chi_{{\mathbb R}^n\setminus B_x})(x)|\\
&+&|T(f\chi_{B_x\setminus 3Q})(\xi)|+|T(f\chi_{{\mathbb R}^n\setminus B_x})(x)|.
\end{eqnarray*}

By the smoothness condition,
\begin{eqnarray*}
&&|T(f\chi_{{\mathbb R}^n\setminus B_x})(\xi)-T(f\chi_{{\mathbb R}^n\setminus B_x})(x)|\\
&&\le \int_{|y-x|>2\,\text{diam}\,Q}|f(y)|\omega\left(\frac{\text{diam}\,Q}{|x-y|}\right)\frac{1}{|x-y|^n}dy\\
&&\le \sum_{k=1}^{\infty}\left(\frac{1}{(2^k\text{diam}\,Q)^n}\int_{2^kB_x}|f|\right)\o(2^{-k})\le c_n\|\omega\|_{\text{\rm{Dini}}}Mf(x).
\end{eqnarray*}
Next, by the size condition,
$$|T(f\chi_{B_x\setminus 3Q})(\xi)|\le c_nC_K\frac{1}{|B_x|}\int_{B_x}|f|\le c_nC_KMf(x).$$
Finally, $|T(f\chi_{{\mathbb R}^n\setminus B_x})(x)|\le T^{\star}f(x)$. Combining the obtained estimates proves part (ii).
\end{proof}

Denote $C_T=\|T\|_{L^2\to L^2}+C_K+\|\omega\|_{\text{\rm{Dini}}}$.
An examination of standard proofs (see, e.g., \cite[Ch. 8.2]{G2}) shows that
\begin{equation}\label{expr}
\max(\|T\|_{L^1\to L^{1,\infty}},\|T^{\star}\|_{L^1\to L^{1,\infty}})\le c_nC_T.
\end{equation}

\begin{proof}[Proof of Theorem \ref{mainr}]
Fix a cube $Q_0\subset {\mathbb R}^n$. Let us show that there exists a
$\frac{1}{2}$-sparse family ${\mathcal F}\subset {\mathcal D}(Q_0)$ such that for a.e. $x\in Q_0$,
\begin{equation}\label{showpoint}
|T(f\chi_{3Q_0})(x)|\le c_nC_T\sum_{Q\in {\mathcal F}}|f|_{3Q}\chi_Q(x).
\end{equation}

It suffices to prove the following recursive claim:
there exist pairwise disjoint cubes $P_j\in {\mathcal D}(Q_0)$ such that
$\sum_j|P_j|\le\frac{1}{2}|Q_0|$ and
\begin{equation}\label{recur}
|T(f\chi_{3Q_0})(x)|\chi_{Q_0}\le c_nC_T|f|_{3Q_0}+\sum_j|T(f\chi_{3P_j})|\chi_{P_j}
\end{equation}
a.e. on $Q_0$. Indeed, iterating this estimate, we immediately get (\ref{showpoint}) with ${\mathcal F}=\{P_j^k\},k\in {\mathbb Z}_+$, where
$\{P_j^0\}=\{Q_0\}$, $\{P_j^1\}=\{P_j\}$ and $\{P_j^k\}$ are the cubes obtained at the $k$-th stage of the iterative process.

Next, observe that for arbitrary pairwise disjoint cubes $P_j\in {\mathcal D}(Q_0)$,
\begin{eqnarray*}
&&|T(f\chi_{3Q_0})|\chi_{Q_0}=|T(f\chi_{3Q_0})|\chi_{Q_0\setminus \cup_jP_j}+\sum_j|T(f\chi_{3Q_0})|\chi_{P_j}\\
&&\le |T(f\chi_{3Q_0})|\chi_{Q_0\setminus \cup_jP_j}+\sum_j|T(f\chi_{3Q_0\setminus 3P_j})|\chi_{P_j}+\sum_j|T(f\chi_{3P_j})|\chi_{P_j}.
\end{eqnarray*}
Hence, in order to prove the recursive claim,
it suffices to show that one can select pairwise disjoint cubes $P_j\in {\mathcal D}(Q_0)$ with
$\sum_j|P_j|\le\frac{1}{2}|Q_0|$ and such that for a.e. $x\in Q_0$,
\begin{equation}\label{show}
|T(f\chi_{3Q_0})|\chi_{Q_0\setminus \cup_jP_j}+\sum_j|T(f\chi_{3Q_0\setminus 3P_j})|\chi_{P_j}
\le c_nC_T|f|_{3Q_0}.
\end{equation}

By part (ii) of Lemma \ref{mainl} and by (\ref{expr}), $\|{\mathcal M}_T\|_{L^1\to L^{1,\infty}}\le \a_nC_T$. Therefore, one can choose $c_n$ such that the set
$$E=\{x\in Q_0:|f|>c_n|f|_{3Q_0}\}\cup \{x\in Q_0:{\mathcal M}_{T,Q_0}f>c_nC_T|f|_{3Q_0}\}$$
will satisfy $|E|\le \frac{1}{2^{n+2}}|Q_0|$.

The Calder\'on-Zygmund decomposition applied to the function $\chi_E$ on $Q_0$ at height $\la=\frac{1}{2^{n+1}}$
produces pairwise disjoint cubes $P_j\in {\mathcal D}(Q_0)$ such that
$$\frac{1}{2^{n+1}}|P_j|\le |P_j\cap E|\le \frac{1}{2}|P_j|$$
and $|E\setminus \cup_jP_j|=0$. It follows that $\sum_j|P_j|\le \frac{1}{2}|Q_0|$ and $P_j\cap E^{c}\not=\emptyset$.
Therefore,
$$\esssup_{\xi\in P_j}|T(f\chi_{3Q_0\setminus 3P_j})(\xi)|\le c_nC_T|f|_{3Q_0}.$$
Also, by part (i) of Lemma \ref{mainl} and by (\ref{expr}), for a.e. $x\in Q_0\setminus \cup_jP_j$,
$$|T(f\chi_{3Q_0})(x)|\le c_nC_T|f|_{3Q_0},$$
which, along with the previous estimate, proves (\ref{show}) and so (\ref{showpoint}).

Take now a partition of ${\mathbb R}^n$ by cubes $R_j$ such that $\text{supp}\,(f)\subset 3R_j$ for each $j$. For example, take a cube $Q_0$ such that
$\text{supp}\,(f)\subset Q_0$ and cover $3Q_0\setminus Q_0$ by $3^n-1$ congruent cubes $R_j$. Each of them satisfies $Q_0\subset 3R_j$. Next, in the same way cover
$9Q_0\setminus 3Q_0$, and so on. The union of resulting cubes, including $Q_0$, will satisfy the desired property.

Having such a partition, apply (\ref{showpoint}) to each $R_j$. We obtain a $\frac{1}{2}$-sparse family ${\mathcal F}_j\subset {\mathcal D}(R_j)$ such that
(\ref{showpoint}) holds for a.e. $x\in R_j$ with $|Tf|$ on the left-hand side. Therefore, setting ${\mathcal F}=\cup_{j}{\mathcal F}_j$, we obtain
that ${\mathcal F}$ is $\frac{1}{2}$-sparse and for a.e. $x\in {\mathbb R}^n$,
$$|Tf(x)|\le c_nC_T\sum_{Q\in {\mathcal F}}|f|_{3Q}\chi_{Q}(x).$$
Thus, (\ref{mainestimate}) holds with a $\frac{1}{2\cdot 3^n}$-sparse family ${\mathcal S}=\{3Q: Q\in {\mathcal F}\}$.
\end{proof}

\section{Remarks and complements}
\begin{remark}\label{nonkernel}
Given a sparse family ${\mathcal S}$ and $1\le r<\infty$, define for $f\ge 0$, 
$${\mathcal A}_{r,\mathcal S}f(x)=\sum_{Q\in {\mathcal S}}\left(\frac{1}{|Q|}\int_{Q}f^r\right)^{1/r}\chi_{Q}(x).$$

Next, notice that the definition of ${\mathcal M}_T$ can be given for arbitrary (non-kernel) sublinear operator $T$. Then, the proof of Theorem \ref{mainr}
with minor modifications allows to get the following result.

\begin{theorem}\label{variant}
Assume that $T$ is of weak type $(q,q)$ and ${\mathcal M}_T$ is of weak type $(r,r)$, where $1\le q\le r<\infty$.
Then, for every compactly supported $f\in L^r({\mathbb R}^n)$, there exists a sparse family
${\mathcal S}$ such that for a.e. $x\in {\mathbb R}^n$,
\begin{equation}\label{mainestim}
|Tf(x)|\le K{\mathcal A}_{r,{\mathcal S}}|f|(x),
\end{equation}
where $K=c_{n,q,r}(\|T\|_{L^q\to L^{q,\infty}}+\|{\mathcal M}_T\|_{L^r\to L^{r,\infty}})$.
\end{theorem}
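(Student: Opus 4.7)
The plan is to follow the proof of Theorem~\ref{mainr} essentially line by line, replacing the $L^1$-based ingredients with their $L^q$- and $L^r$-counterparts and invoking the hypothesis $\|{\mathcal M}_T\|_{L^r\to L^{r,\infty}}<\infty$ directly in place of Lemma~\ref{mainl}(ii). Since $f\in L^r$ has compact support and $r\ge q$, one has $f\in L^q_{\text{loc}}({\mathbb R}^n)$, so all the local integrals appearing below are finite.

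The first step is an $L^q$-analogue of Lemma~\ref{mainl}(i): for a.e.\ $x\in Q_0$,
$$|T(f\chi_{3Q_0})(x)|\le c_{n,q}\|T\|_{L^q\to L^{q,\infty}}|f(x)|+{\mathcal M}_{T,Q_0}f(x).$$
The argument is identical to the proof of part~(i), except that the weak $(1,1)$ bound for $T$ is replaced by the weak $(q,q)$ bound applied to $f\chi_{3Q(x,s)}$. This yields
$$\essinf_{y\in E_s(x)}|T(f\chi_{3Q(x,s)})(y)|\le \|T\|_{L^q\to L^{q,\infty}}\,|E_s(x)|^{-1/q}\left(\int_{3Q(x,s)}|f|^q\right)^{1/q},$$
and letting $s\to 0$ at a Lebesgue point of $|f|^q$ produces the claimed bound, since $|3Q(x,s)|/|E_s(x)|$ is uniformly bounded in $n$ as $s\to 0$.

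With this lemma in hand, I would repeat the recursive argument verbatim, only with $L^r$-averages in place of $L^1$-averages. Writing $\langle g\rangle_Q=\frac{1}{|Q|}\int_Q g$, the new exceptional set is
$$E=\{x\in Q_0:|f(x)|^r>c_n\langle|f|^r\rangle_{3Q_0}\}\cup\{x\in Q_0:{\mathcal M}_{T,Q_0}f(x)>c_n K\langle|f|^r\rangle_{3Q_0}^{1/r}\},$$
where Chebyshev handles the first term and the weak $(r,r)$ bound of ${\mathcal M}_T$ (combined with the pointwise estimate ${\mathcal M}_{T,Q_0}f\le {\mathcal M}_T(f\chi_{3Q_0})$ on $Q_0$) handles the second, with the constants arranged so that $|E|\le 2^{-n-2}|Q_0|$. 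Calder\'on--Zygmund at level $2^{-n-1}$ produces pairwise disjoint cubes $P_j\in {\mathcal D}(Q_0)$ with $\sum_j|P_j|\le \tfrac12|Q_0|$ and $P_j\cap E^c\ne\emptyset$. As in the original proof, this condition gives $\esssup_{\xi\in P_j}|T(f\chi_{3Q_0\setminus 3P_j})(\xi)|\le c_nK\langle|f|^r\rangle_{3Q_0}^{1/r}$, while on $Q_0\setminus\cup_j P_j$ the $L^q$-analogue above, together with $|f|\le c_n^{1/r}\langle|f|^r\rangle_{3Q_0}^{1/r}$ a.e., gives $|T(f\chi_{3Q_0})|\le c_{n,q,r}K\langle|f|^r\rangle_{3Q_0}^{1/r}$. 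Iterating the resulting recursive bound and then applying the partition-of-${\mathbb R}^n$ step from the end of the proof of Theorem~\ref{mainr} yields (\ref{mainestim}) with a $\tfrac{1}{2\cdot 3^n}$-sparse family.

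The only nontrivial modification is the $L^q$-analogue of Lemma~\ref{mainl}(i); the rest of the original argument never exploits the specific form of the $L^1$-average beyond the two facts that it controls $|f|$ after the stopping procedure and that it combines with the grand maximal bound on the good set. Both properties survive the passage to $L^r$-averages with $L^q$-pointwise control, so the rest of the proof goes through unchanged.
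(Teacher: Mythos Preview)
Your proposal is correct and follows essentially the same route as the paper's own argument: the paper explicitly remarks that part~(i) of Lemma~\ref{mainl} goes through with $\|T\|_{L^q\to L^{q,\infty}}$ in place of $\|T\|_{L^1\to L^{1,\infty}}$, that the assumed weak $(r,r)$ bound on ${\mathcal M}_T$ replaces part~(ii), and that after the obvious modification of the set $E$ the recursive estimate~(\ref{recur}) holds with $K\langle|f|^r\rangle_{3Q_0}^{1/r}$ on the right; the remainder of the proof is declared identical. You have simply written out these modifications in detail, and all the details (in particular the pointwise bound ${\mathcal M}_{T,Q_0}f\le {\mathcal M}_T(f\chi_{3Q_0})$ on $Q_0$, and the use of $q\le r$ to pass from $|f|$ to $\langle|f|^r\rangle_{3Q_0}^{1/r}$ on the good set) are correct.
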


Indeed, part (i) of Lemma \ref{mainl} works with $\|T\|_{L^1\to L^{1,\infty}}$ replaced by $\|T\|_{L^q\to L^{q,\infty}}$.
Next, part (ii) of Lemma \ref{mainl} (the only part in the proof of Theorem \ref{mainr} where the kernel assumptions were used) is replaced by the
postulate that ${\mathcal M}_T$ is of weak type $(r,r)$. Finally, under trivial changes in the definition of the set $E$, we obtain that the key estimate
(\ref{recur}) holds with $c_nC_T|f|_{3Q_0}$ replaced by $K\big(\frac{1}{|3Q_0|}\int_{3Q_0}|f|^r\big)^{1/r}$. The rest of the proof is identically the same.

Recently, F. Bernicot, D. Frey and S. Petermichl \cite{BFP} obtained sharp weighted estimates for a large class of singular non-integral operators in a rather general
setting using similar ideas based on a domination by sparse operators. However, the main result in~\cite{BFP} and  Theorem \ref{variant} include some non-intersecting
cases.
\end{remark}

\begin{remark}\label{remab} It is easy to see that the cubes of the resulting  sparse family ${\mathcal S}$ in Theorem \ref{mainr} (and so in Theorem \ref{variant})
are not dyadic. But approximating an arbitrary cube by cubes from a finite number of dyadic grids
(as was shown in \cite[Lemma 2.5]{HLP} or \cite[Theorem~3.1]{LN}) yields
\begin{equation}\label{app}
{\mathcal A}_{r,\mathcal S}f(x)\le c_{n,r}\sum_{j=1}^{3^n}{\mathcal A}_{r,{\mathcal S}_j}f(x),
\end{equation}
where ${\mathcal S}_j$ is a sparse family from a dyadic grid ${\mathscr D}_j$.
\end{remark}

\begin{remark}\label{weights}
Recall that a weight (that is, a non-negative locally integrable function) $w$ satisfies the $A_p, 1<p<\infty,$ condition if
$$[w]_{A_p}=\sup_{Q}\Big(\frac{1}{|Q|}\int_Qw\,dx\Big)\Big(\frac{1}{|Q|}\int_Qw^{-\frac{1}{p-1}}\,dx\Big)^{p-1}<\infty.$$

The following lemma is well known (its variations and extensions can be found in \cite{CMP,HL,LN,M}).
\begin{lemma}\label{weight} For every $\eta$-sparse family ${\mathcal S}$ and for all $1\le r<p<\infty$,
\begin{equation}\label{shw}
\|{\mathcal A}_{r,{\mathcal S}}f\|_{L^p(w)}\le c_{n,p,r,\eta}[w]_{A_{p/r}}^{\max\big(1,\frac{1}{p-r}\big)}\|f\|_{L^p(w)}.
\end{equation}
\end{lemma}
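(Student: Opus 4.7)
The plan is to prove (\ref{shw}) by duality combined with a principal-cubes stopping-time argument. First, by Remark~\ref{remab} and (\ref{app}), I would reduce to the case where ${\mathcal S}$ is a sparse family in a single fixed dyadic grid, paying only a dimensional constant. Set $q=p/r>1$ and let $\sigma=w^{-1/(q-1)}$ denote the $A_q$-dual weight of $w$, so that $\langle w\rangle_Q\langle\sigma\rangle_Q^{q-1}\le[w]_{A_q}$ for every cube $Q$. By $L^p(w)$-duality it then suffices to bound the bilinear form
\[
\sum_{Q\in{\mathcal S}}\langle|f|^r\rangle_Q^{1/r}\,\langle gw\rangle_Q\,|Q|
\]
by $C\,[w]_{A_q}^{\max(1,1/(p-r))}\|f\|_{L^p(w)}\|g\|_{L^{p'}(w)}$ for nonnegative $f,g$.

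Next, I would construct a family $\mathcal F\subset\mathcal S$ of principal cubes, starting from the maximal cubes of $\mathcal S$ and recursively adjoining the maximal $Q'\in\mathcal S$ contained in an already-selected principal cube $F$ for which either $\langle|f|^r\rangle_{Q'}^{1/r}$ or $\langle gw\rangle_{Q'}$ exceeds a large multiple of the corresponding average on $F$. A standard Chebyshev argument shows that $\mathcal F$ is itself sparse, and, writing $\pi(Q)$ for the smallest principal ancestor of $Q\in\mathcal S$, the $\eta$-sparseness of $\mathcal S$ gives $\sum_{\pi(Q)=F}|Q|\le|F|/\eta$. Coupled with the doubling control of both averages on $\{Q:\pi(Q)=F\}$, the bilinear sum above is dominated by
\[
C\sum_{F\in\mathcal F}\langle|f|^r\rangle_F^{1/r}\,\langle gw\rangle_F\,|F|.
\]
Inside each $F$ I would then introduce the weights $\sigma\,dx$ and $w\,dx$, rewriting each Lebesgue average as a $\sigma$- or $w$-weighted average of the form $\langle|f|^r\sigma^{-1}\rangle_F^\sigma$ or $\langle g\rangle_F^w$, so as to absorb the factor $\langle w\rangle_F\langle\sigma\rangle_F^{q-1}$ into $[w]_{A_q}$. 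A Hölder step in $F$ followed by the Carleson embedding on the sparse tree $\mathcal F$ reduces matters to the sharp $L^s$-bound for a dyadic maximal operator with respect to the measure $\sigma\,dx$ (or $w\,dx$).

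The main obstacle is obtaining the sharp exponent $\max(1,1/(p-r))$: a symmetric application of Hölder after the principal-cubes step would yield the cruder bound $[w]_{A_q}^{1+1/(p-r)}$. The remedy (as worked out in \cite{HL,M}) is to distribute the $[w]_{A_q}$ factor asymmetrically between the two weighted maximal functions, using an $A_\infty$-type ingredient (whose constant is controlled by $[w]_{A_q}$ with exponent $1$) on one side and leaving the full $A_q$ exponent on the other, and then splitting according to whether $p-r\ge 1$ or $p-r<1$. The role of $r$ is essentially notational: once $q=p/r$ and $\sigma$ are in place, the argument tracks the classical $r=1$ scheme.
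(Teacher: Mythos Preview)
Your outline is a reasonable and, in the literature, well-trodden route to (\ref{shw}), but it diverges from the paper's proof in almost every technical choice. The paper deliberately \emph{avoids} the dyadic reduction (\ref{app}) you invoke as your first step; it works directly with an arbitrary $\eta$-sparse family using only the pairwise disjoint sets $E_Q$ built into the definition. There are no principal cubes, no stopping times, and no Carleson embedding. Instead, the paper observes that for the bilinear form $\sum_{Q\in\mathcal S}\langle f^r\rangle_Q^{1/r}\int_Q g$, the two one-parameter sums
\[
\sum_{Q}\Big(\frac{1}{w(3Q)}\int_Q g\Big)^{p'}w(E_Q)
\quad\text{and}\quad
\sum_{Q}\Big(\frac{1}{\nu(3Q)}\int_Q f^r\Big)^{p/r}\nu(E_Q)
\]
are each controlled by a single appeal to the $L^s$ boundedness of the \emph{centered} weighted maximal operator $M_w^c$ (resp.\ $M_\nu^c$), using $E_Q$-disjointness. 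One then multiplies and divides by an explicit quantity $T_{p,r}(w;Q)$ and applies H\"older once; the sharp exponent $\max(1,1/(p-r))$ falls out of a short purely algebraic computation bounding $\sup_Q T_{p,r}(w;Q)$ via $|E_Q|^{p/r}\le w(E_Q)\nu(E_Q)^{p/r-1}$.

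In particular, the paper needs no $A_\infty$-type input and no case split between $p-r\ge1$ and $p-r<1$ beyond the final line of algebra. What your approach buys is familiarity (it is the argument most readers will recognize from \cite{CMP,HL,M}); what the paper's approach buys is brevity, self-containment, and applicability to non-dyadic sparse families without passing through~(\ref{app}). Your sketch of how to recover the sharp exponent after the principal-cubes step is the one genuinely delicate point you leave to citation; it can be done, but it is exactly the step the paper's argument renders unnecessary.
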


Notice that ${\mathcal S}$ in the above mentioned works is a sparse family of dyadic cubes. Thus, the case of an arbitrary sparse family can be
treated by means of (\ref{app}). On the other hand, (\ref{app}) is not necessary for deriving Lemma \ref{weight}. In order to keep this paper essentially self-contained, we give
a proof of Lemma \ref{weight} (avoiding (\ref{app})) in the Appendix.

Theorem \ref{variant} along with Lemma \ref{weight} implies the following.

\begin{cor}\label{tweight}
Assume that $T$ is of weak type $(q,q)$ and ${\mathcal M}_T$ is of weak type $(r,r)$, where $1\le q\le r<\infty$.
Then, for all $r<p<\infty$,
$$
\|T\|_{L^p(w)}\le C[w]_{A_{p/r}}^{\max\big(1,\frac{1}{p-r}\big)},
$$
where $C=c_{n,p,q,r}(\|T\|_{L^q\to L^{q,\infty}}+\|{\mathcal M}_T\|_{L^r\to L^{r,\infty}})$.
\end{cor}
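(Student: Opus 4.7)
The proof plan is to combine the two main ingredients of the preceding subsection: the pointwise sparse domination from Theorem \ref{variant} and the weighted norm inequality for sparse operators from Lemma \ref{weight}. The strategy is essentially a one-line deduction once the density issue is handled.

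First, I would reduce to the case of a compactly supported $f$. Since such functions are dense in $L^p(w)$, it suffices to prove the stated inequality for them, provided one also checks that any compactly supported $f\in L^p(w)$ automatically lies in $L^r$ so that Theorem \ref{variant} applies. This is a routine H\"older argument: writing $|f|^r = |f|^r w^{r/p}\cdot w^{-r/p}$ and applying H\"older with exponents $p/r$ and $p/(p-r)$ on $\mathrm{supp}\,f$, one gets
$$
\int_{\mathrm{supp}\,f}|f|^r\,dx\le \Big(\int |f|^p w\,dx\Big)^{r/p}\Big(\int_{\mathrm{supp}\,f} w^{-\frac{r}{p-r}}\,dx\Big)^{\frac{p-r}{p}},
$$
and the last factor is finite because $w\in A_{p/r}$ implies local integrability of $w^{-1/(p/r-1)}=w^{-r/(p-r)}$.

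Second, I would apply Theorem \ref{variant} to obtain a sparse family ${\mathcal S}$ (depending on $f$) such that
$$
|Tf(x)|\le K\,{\mathcal A}_{r,{\mathcal S}}|f|(x)\quad\text{a.e.,}
$$
with $K=c_{n,q,r}\big(\|T\|_{L^q\to L^{q,\infty}}+\|{\mathcal M}_T\|_{L^r\to L^{r,\infty}}\big)$. Taking the $L^p(w)$-norm on both sides and invoking Lemma \ref{weight} (which applies to any $\eta$-sparse family with $\eta=\frac{1}{2\cdot 3^n}$, the value produced by Theorem \ref{variant}) yields
$$
\|Tf\|_{L^p(w)}\le K\,\|{\mathcal A}_{r,{\mathcal S}}|f|\|_{L^p(w)}\le c_{n,p,r}\,K\,[w]_{A_{p/r}}^{\max(1,\frac{1}{p-r})}\|f\|_{L^p(w)},
$$
which is exactly the claimed bound. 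A standard density argument extends this to all $f\in L^p(w)$.

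I do not foresee a real obstacle here; the whole content of the corollary has already been encapsulated in Theorem \ref{variant} and Lemma \ref{weight}. The only minor subtlety worth mentioning is that the sparse family in Theorem \ref{variant} is not dyadic, but since Lemma \ref{weight} is formulated for arbitrary $\eta$-sparse families (as emphasized in Remark \ref{weights}), no appeal to the dyadic approximation (\ref{app}) is needed.
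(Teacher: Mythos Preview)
Your proposal is correct and follows exactly the approach the paper indicates: the corollary is stated as an immediate consequence of Theorem~\ref{variant} combined with Lemma~\ref{weight}, and you have simply spelled out the details (including the density reduction and the verification that $f\in L^r$) that the paper leaves implicit. There is nothing to add.
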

\end{remark}

\begin{remark}\label{conv}
Consider a class of rough singular integrals $Tf=\text{p.v.}f*K$, where
$K(x)=\frac{\Omega(x)}{|x|^n}$ with $\O$ homogeneous of degree zero, $\O\in L^{\infty}(S^{n-1})$ and $\int_{S^{n-1}}\O(x)d\sigma(x)=0$.

It was shown in \cite{HRT} that $\|T\|_{L^2(w)}\le c_n\|\Omega\|_{L^{\infty}}[w]_{A_2}^2$, and it was conjectured there that the squared dependence on $[w]_{A_2}$
can be replaced by the linear one. Since $T$ is of weak type $(1,1)$ (as was proved by A.~Seeger~\cite{S}), by Corollary \ref{tweight}, it would suffice to prove that ${\mathcal M}_T$ is of
weak type $(1,1)$, too. However, it is even not clear to us whether ${\mathcal M}_T$ is an $L^2$ bounded operator in this setting.
\end{remark}

\section{Appendix}
Let us prove (\ref{shw}).
Denote $\si=w^{-\frac{1}{p-1}}$ and $\nu=w^{-\frac{r}{p-r}}$. Let $E_Q$ be pairwise disjoint subsets of $Q\in {\mathcal S}$.

Since $\frac{1}{w(3Q)}\int_Qg\le \inf_QM_w^c(gw^{-1})$, where $M_w^c$ is the centered weighted maximal operator with respect to $w$,
we obtain
\begin{eqnarray*}
\sum_{Q\in{\mathcal S}}\Big(\frac{1}{w(3Q)}\int_Qg\Big)^{p'}w(E_Q)&\le& \sum_{Q\in {\mathcal S}}\int_{E_Q}M_{w}^c(gw^{-1})^{p'}w\\
&\le& \|M_w^c(gw^{-1})\|_{L^{p'}(w)}^{p'}\le c_{n,p}\|g\|_{L^{p'}(\si)}^{p'}.
\end{eqnarray*}
Similarly,
\begin{eqnarray*}
\sum_{Q\in {\mathcal S}}\Big(\frac{1}{\nu(3Q)}\int_{Q}f^r\Big)^{p/r}\nu(E_Q)\le c_{n,p,r}\|f\|_{L^p(w)}^p.
\end{eqnarray*}
Therefore, multiplying and dividing by
$$
T_{p,r}(w;Q)=\frac{w(3Q)}{w(E_Q)^{1/p'}}\frac{\nu(3Q)^{1/r}}{\nu(E_Q)^{1/p}}\frac{1}{|Q|^{1/r}}
$$
along with H\"older's inequality yield
\begin{eqnarray*}
\sum_{Q\in {\mathcal S}}\Big(\frac{1}{|Q|}\int_{Q}f^r\Big)^{1/r}\int_Qg\le c_{n,p,r}\sup_QT_{p,r}(w;Q)\|f\|_{L^p(w)}\|g\|_{L^{p'}(\si)},
\end{eqnarray*}
which, by duality, is equivalent to
$$
\|{\mathcal A}_{r,{\mathcal S}}f\|_{L^p(w)}\le c_{n,p,r}\sup_QT_{p,r}(w;Q)\|f\|_{L^p(w)}.
$$

It remains to show that $\sup_QT_{p,r}(w;Q)\le c_{n,p,r,\eta}[w]_{A_{p/r}}^{\max\big(1,\frac{1}{p-r}\big)}$.
By H\"older's inequality,
$$|Q|^{p/r}\le \eta^{-p/r}|E_Q|\le \eta^{-p/r}w(E_Q)\nu(E_Q)^{\frac{p}{r}-1}.$$
From this,
$$\frac{w(3Q)}{w(E_Q)}\Big(\frac{\nu(3Q)}{\nu(E_Q)}\Big)^{\frac{p}{r}-1}\le \eta^{-p/r}\frac{w(3Q)}{|Q|}\Big(\frac{\nu(3Q)}{|Q|}\Big)^{\frac{p}{r}-1}\le (3^n/\eta)^{p/r}[w]_{A_{p/r}},$$
and therefore,
\begin{eqnarray*}
T_{p,r}(w;Q)&=&\left[\frac{w(3Q)}{|Q|}\Big(\frac{\nu(3Q)}{|Q|}\Big)^{\frac{p}{r}-1}\right]^{1/p}\Big(\frac{w(3Q)}{w(E_Q)}\Big)^{1/p'}\Big(\frac{\nu(3Q)}{\nu(E_Q)}\Big)^{1/p}\\
&\le& 3^{n/r}[w]_{A_{p/r}}^{1/p}\left[\frac{w(3Q)}{w(E_Q)}\Big(\frac{\nu(3Q)}{\nu(E_Q)}\Big)^{\frac{p}{r}-1}\right]^{\max\big(\frac{1}{p'},\frac{r}{p(p-r)}\big)}\\
&\le& c_{n,p,r,\eta}[w]_{A_{p/r}}^{\frac{1}{p}+\max\big(\frac{1}{p'},\frac{r}{p(p-r)}\big)}=c_{n,p,r,\eta}[w]_{A_{p/r}}^{\max\big(1,\frac{1}{p-r}\big)}.
\end{eqnarray*}

\vskip 3mm
{\bf Acknowledgement.}
I am grateful to Javier Duoandikoetxea for valuable remarks on an earlier version of this paper.

\end{document}